\documentclass[12pt,reqno]{article}

\usepackage[usenames]{color}
\usepackage{amssymb}
\usepackage{amsmath}
\usepackage{amsthm}
\usepackage{amsfonts}
\usepackage{amscd}
\usepackage{graphicx}

\usepackage[colorlinks=true,
linkcolor=webgreen,
filecolor=webbrown,
citecolor=webgreen]{hyperref}

\definecolor{webgreen}{rgb}{0,.5,0}
\definecolor{webbrown}{rgb}{.6,0,0}

\usepackage{color}
\usepackage{fullpage}
\usepackage{float}

\usepackage{graphics}
\usepackage{latexsym}
\usepackage{epsf}
\usepackage{breakurl}

\newcommand{\seqnum}[1]{\href{https://oeis.org/#1}{\rm \underline{#1}}}

\newcommand{\N}{{\mathbb N}}

\def\gcd{\mathrm{gcd}}
\def\modd#1 #2{#1\ \mbox{\rm (mod}\ #2\mbox{\rm )}}

\begin{document}

\begin{center}
\epsfxsize=4in

\end{center}

\theoremstyle{plain}
\newtheorem{theorem}{Theorem}
\newtheorem{corollary}[theorem]{Corollary}
\newtheorem{lemma}[theorem]{Lemma}
\newtheorem{proposition}[theorem]{Proposition}

\theoremstyle{definition}
\newtheorem{definition}[theorem]{Definition}
\newtheorem{example}[theorem]{Example}
\newtheorem{conjecture}[theorem]{Conjecture}
\newtheorem{problem}[theorem]{Problem}

\theoremstyle{remark}
\newtheorem{remark}[theorem]{Remark}

\begin{center}
\vskip 1cm{\LARGE\bf  Integer Sequences which Are Closed with Respect to Multiplication and whose Sumset does not Intersect the Sequence
}

\vskip 1cm
\large
Georgi Demirov\\
Independent Scholar\\
9000 Varna\\
Bulgaria\\
\href{mailto:gdemirov1954@gmail.com}{\tt gdemirov1954@gmail.com} \\
\vskip .2 in
\large
Ognian Trifonov\\
University of South Carolina\\
Department of Mathematics \\
Columbia, SC 29208 \\
USA\\
\href{mailto:trifonov@math.sc.edu}{\tt trifonov@math.sc.edu} \\
\end{center}

\vskip .2 in

\begin{abstract}
We investigate increasing sequences of integers with the property that the product of every two terms of the sequence is also a term of the sequence and the sum of every two terms is not a term of the sequence. We say that a sequence with the above properties is maximal if it is not a proper subsequence of another sequence with the above properties. We determine whether the sequences in three families are maximal or not.
\end{abstract}

\section{Introduction}
Let $S = (a_n )_{n \geq 1}$ be an increasing sequence of positive integers which satisfies the following two properties
\begin{itemize}
\item ${\mathcal P}_1$ For every $n \in \N $ and $m \in \N$, we have $a_m a_n \in S$; and
\item ${\mathcal P}_2$  For every $n \in \N $ and $m \in \N$, we have $a_m +  a_n \not  \in S$.
\end{itemize}
In this paper we have adopted the convention that if $S$ is a sequence, then $a \in S$ means that $a$ is one of the terms of the sequence. 

For brevity, we refer to increasing sequences of positive integers which satisfy ${\mathcal P}_1$ and ${\mathcal P}_2$ as {\it GD sequences}. 

Below are some examples of GD sequences. 

\begin{example} \label{Ex1}
Let $k \in \mathbb{N}$. Define the sequence $T_k = (n^k)_{n\geq 1}$. Then, $T_k$ is a GD sequence if $k \geq 3$. Clearly, property ${\mathcal P}_1$ holds. Also, 
property ${\mathcal P}_2$ holds for $k \geq 3$ due to the monumental work of Wiles \cite{Wiles}.  Note that sequences $T_3$, $T_4$, $T_5$, and $T_6$ are sequences \seqnum{A000578}, \seqnum{A000583}, \seqnum{A000584}, and 
\seqnum{A001014} in the OEIS \cite{oeis}, respectively.
\end{example}

\begin{example} \label{Ex2}
Let $k \in \mathbb{N}$. Define the sequence $V_k =((n-1)k+1)_{n\geq 1}$. Then, $V_k$ is a GD sequence if $k \geq 2$. It is easy to see that  both properties ${\mathcal P}_1$  and ${\mathcal P}_2 $ hold. The 
sequences  $V_3$, $V_4$, $V_5$, and $V_6$ are sequences \seqnum{A005408}, \seqnum{A016777}, \seqnum{A016813}, and 
\seqnum{A016861} in the OEIS \cite{oeis}, respectively.
\end{example}

\begin{example} \label{Ex3}
Let $k \in \mathbb{N}$. Define the sequence $W_k =(k^{n-1})_{n\geq 1}$. Then, $W_k$ is a GD sequence if $k \geq 3$. Indeed, property ${\mathcal P}_1$ holds and property ${\mathcal P}_2$ holds as well, since $k^n + k^n < k^{n+1}$ if $k >2$. 
The sequences  $W_3$, $W_4$, $W_5$, and $W_6$ are sequences \seqnum{A000244}, \seqnum{A000302}, \seqnum{A000351}, and 
\seqnum{A000400} in the OEIS \cite{oeis}, respectively.
\end{example}

The notion of a GD sequence can be extended by allowing  its terms to be real numbers, complex numbers, or elements of a ring. For example, the sequence $(x^{n-1})_{n \geq 1}$ satisfies both properties 
${\mathcal P}_1$ and ${\mathcal P}_2$ if $x$ is a real number such that $x > \frac{1+\sqrt{5}}{2}$ and $x \neq 2$. Also, the sequence $(z^{n-1})_{n \geq 1}$ satisfies both properties 
${\mathcal P}_1$ and ${\mathcal P}_2$ if $z$ is a complex number such that $|z| > \frac{1+\sqrt{5}}{2}$ and $z \neq 2$, and so does the finite sequence $(\hat{1}, \hat{3})$ where $\hat{1}$ and $\hat{3}$ are elements of $\mathbb{Z}_4$.

We  say that a GD sequence $S$ is a {\it maximal GD sequence} if there is no GD sequence $S_1$ such that $S$ is a proper subsequence of $S_1$. For example, the sequence $V_2$ is a maximal GD sequence. 

In this paper we determine which of the GD sequences defined in Example \ref{Ex1}, Example \Ref{Ex2}, and Example \ref{Ex3} are maximal and which are not. 

The main results of the paper are the following two theorems.

\begin{theorem} \label{T1}
For every integer $k \geq 3$ the GD sequence $T_k$ is not maximal.
\end{theorem}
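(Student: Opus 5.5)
The plan is to enlarge $T_k$ by one new "generator" $c$. Let
$$S_c=\{c^j m^k : j\ge 0,\ m\ge 1\},$$
where $c\ge 2$ is an integer that is not a perfect $k$-th power; $S_c$ is to be read as an increasing sequence. Then $T_k\subsetneq S_c$, because $c\in S_c\setminus T_k$. Property ${\mathcal P}_1$ is automatic, since $(c^i a^k)(c^j b^k)=c^{i+j}(ab)^k$. So the whole proof comes down to choosing $c$ so that ${\mathcal P}_2$ holds, that is, so that
$$c^{i}x^k+c^{j}y^k=c^{l}z^k$$
has no solution in positive integers $x,y,z$ and exponents $i,j,l\ge 0$.

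\textbf{Step 1 (normalization).} I would take $c=p$ to be a prime, with $p\ge 3$; the case $c=2$ fails because $1+1=2$. Write each term as $p^{e}w^k$ with $p\nmid w$, absorbing the full $k$-th power part of the $p$-power into $w$. Then every element of $S_p$ is $p^{r}w^k$ with $0\le r<k$ and $p\nmid w$. After dividing by a common factor and comparing $p$-adic valuations, a solution reduces to one of the two shapes
$$x^k+p^{r}y^k=p^{s}z^k \qquad\text{or}\qquad x^k+y^k=p^{s}z^k,$$
with $0\le r,s<k$, with $x,y,z$ pairwise coprime, and with $p\nmid xyz$ apart from the forced valuations. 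The case $r=s=0$ is exactly the Fermat case, which is excluded by Wiles' theorem as in Example~\ref{Ex1}.

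\textbf{Step 2 (local obstruction for the mixed equations).} For the remaining equations I would look for congruence obstructions, choosing $p$ (or $c$) to make them work. Comparing valuations disposes of most cases: if $r\ne s$ and $r,s>0$, then $v_p$ of the left side is $0$ or $r$ while the right side has valuation $s$. This leaves essentially the cases $x^k+p^ry^k=z^k$ with $p\nmid xz$, and $x^k+y^k=p^sz^k$. Here I would impose auxiliary conditions on $p$ modulo a prime $\ell\equiv 1\pmod{k}$, so that the $k$-th powers modulo $\ell$ form a subgroup $H$ of index $k$. If $p$ generates $(\mathbb Z/\ell)^\times/H$, then the sets $H$, $pH$, $\ldots$, $p^{k-1}H$ are distinct cosets, and I would try to show that no sum of two elements of $H\cup\{0\}$ and $p^rH\cup\{0\}$ can land in $p^sH\cup\{0\}$ in a way that is compatible with all the side conditions.

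\textbf{Main obstacle.} A pure congruence argument cannot exclude everything. For example, $x^k+y^k=p^sz^k$ for $k$ odd always has the local solution $x=-y$, and equations of the form $x^k+y^k=Cz^k$ are in general deep (compare Darmon--Merel for $C=2$). So the hardest step is the equation $x^k+y^k=p^sz^k$ with $1\le s<k$. My first attempt would be to choose $c$ not as a prime but as a large $k$-th-power-free number whose prime factors are all $\equiv 1 \pmod{k}$ and for which $-1$ and small quotients are controlled. If that fails, I would fall back on citing known results on the generalized Fermat equation $x^n+y^n=Cz^n$ for a suitable single $C$, for instance via the modular method, to finish ${\mathcal P}_2$.
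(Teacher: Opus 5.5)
Your set-up agrees with the paper's. You adjoin all powers of one prime $c$ (the paper uses $c=11$), observe that $\mathcal{P}_1$ is automatic, and use $c$-adic valuations to reduce $\mathcal{P}_2$ to the equations $x^k+y^k=c^s z^k$ and $z^k-x^k=c^r y^k$, which must have no solutions with $xyz\neq 0$. But the proposal stops exactly where the content of the theorem begins: you never prove this for any particular $c$, so $\mathcal{P}_2$ is not established.

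Moreover, your Step 2 cannot work for any choice of $c$. For odd $k$, take $y=-1$ and $z=\ell^N t$. Once $N$ is large, $1+c^s\ell^{Nk}t^k$ is a $k$-th power in $\mathbb{Z}_\ell$, so $x^k+y^k=c^s z^k$ has primitive solutions with $xyz\neq 0$ in every $\mathbb{Z}_\ell$ (and in $\mathbb{R}$). Likewise $x=1$, $y=\ell^N t$ works for $x^k+c^r y^k=z^k$ for every $k$. Hence no congruence modulo any $\ell\equiv 1 \pmod{k}$, and no coset condition on $c$ in $(\mathbb{Z}/\ell\mathbb{Z})^\times/H$, can exclude these equations. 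A local argument only helps for the $+$ equation with $k$ even; this is how the paper handles $x^4+y^4=11^a z^4$, via $11\equiv 3 \pmod 4$. Choosing $c$ with all prime factors $\equiv 1\pmod k$ does not change this, and no known theorem covers such $C$ uniformly in $k$.

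What is actually required is a specific $C$ for which deep global results are available, and this is what the paper supplies.
\begin{itemize}
\item It first disposes of composite $k\neq 4$ with no new input. If $q$ is the least prime factor of $k$ and $l=k/q$, then $l\ge 3$ and $T_k\subsetneq T_l$, which is GD by Wiles.
\item For $k$ an odd prime it takes $C=11$. For $k=3$ it cites Selmer's tables for $x^3+y^3=11z^3$ and $x^3+y^3=121z^3$.
\item For prime $k\ge 5$ it cites the modular-method theorem quoted from Cohen: there are no nonzero pairwise coprime solutions of $x^p+y^p=s^\alpha z^p$ for primes $3\le s\le 100$, $s\neq 31$. It applies this after reducing to pairwise coprime solutions.
\item For $k=4$ it also needs $x^4-y^4=11^a z^4$, which it settles with the Ratcliffe--Tho table of integers that are differences of two rational fourth powers.
\end{itemize}
Your fallback, ``known results on $x^n+y^n=Cz^n$'', would not cover this last equation. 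For even $k$ your reduced equation $z^k-x^k=c^r y^k$ is a difference, and when $k$ is a power of $2$ it cannot be rewritten as a sum with an odd exponent. Without a named $C$ and these specific inputs, the proposal is a strategy rather than a proof.
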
 

\begin{theorem}  \label{T2}
We have
\begin{itemize}
\item the GD sequences $V_k$ are maximal for $k=2,3$, but are not maximal for  $k \geq 4$; and

\item the GD sequences $W_k$ are not maximal for all $k \geq 3$.
\end{itemize}
\end{theorem}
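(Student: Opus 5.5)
The plan is to handle each family by congruence arguments, reusing the fact from Example \ref{Ex2} that $V_m$ is a GD sequence for every $m \geq 2$. A useful general remark: if $S$ is a GD sequence and $x \notin S$ is an integer such that some sum formed from $x$ and terms of $S$ (for instance $x+a$ or $x+x$) lies in $S$, then no GD sequence containing $S$ can contain $x$. Such a sum stays in any larger sequence $S_1 \supseteq S$, so property ${\mathcal P}_2$ would fail in $S_1$.

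\textbf{Maximality of $V_2$ and $V_3$.}
\begin{itemize}
\item For $V_2$, the odd numbers: any candidate $x \notin V_2$ is even. Then $x + 1$ is odd, so it lies in $V_2$, and $x$ is excluded.
\item For $V_3$, the integers $\equiv 1 \pmod 3$: a candidate $x$ has $x \equiv 0$ or $x \equiv 2 \pmod 3$.
\begin{itemize}
\item If $x \equiv 0$, then $x+1 \equiv 1 \pmod 3$, so $x + 1 \in V_3$.
\item If $x \equiv 2$, then $x + x = 2x \equiv 1 \pmod 3$, so $2x \in V_3$.
\end{itemize}
\end{itemize}
Every potential new term is therefore forbidden. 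Hence any GD sequence containing $V_2$ or $V_3$ equals it, so both are maximal.

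\textbf{Non-maximality of $V_k$ for $k \geq 4$.} I will enlarge $V_k$ to the set $U_k$ of positive integers congruent to $\pm 1 \pmod k$.
\begin{itemize}
\item Property ${\mathcal P}_1$ holds because $\{\pm 1\}$ is closed under multiplication modulo $k$.
\item Sums of two terms of $U_k$ are congruent to $2$, $0$ or $-2 \pmod k$. None of these is $\equiv \pm 1$, since that would require $k \mid 1$ or $k \mid 3$, which is impossible for $k \geq 4$. So ${\mathcal P}_2$ holds.
\item $U_k$ properly contains $V_k$, since $k-1 \in U_k \setminus V_k$ for $k \geq 3$.
\end{itemize}

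\textbf{Non-maximality of $W_k$.} Since $k \equiv 1 \pmod{k-1}$, every power $k^{n-1}$ is $\equiv 1 \pmod{k-1}$, so $W_k \subseteq V_{k-1}$. Because $k-1 \geq 2$, $V_{k-1}$ is a GD sequence by Example \ref{Ex2}. The containment is proper: $2k-1 = 1 + 2(k-1) \in V_{k-1}$, but $2k - 1$ lies strictly between $k$ and $k^2$ for $k \geq 3$, so it is not a power of $k$.

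\textbf{Main obstacle.} The only delicate point is the maximality half. There one must make sure the exclusion argument works even when several new elements are added at once. This is handled by the general remark: each offending sum uses only the single new element $x$ and terms of the original sequence, so it remains a violation of ${\mathcal P}_2$ in every larger sequence.
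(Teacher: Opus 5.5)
Your proof is correct. For $V_2$, $V_3$ and $V_k$ with $k\ge 4$ you follow the paper, using the same enlargement by the residue class $-1 \pmod k$ and also writing out the verification of ${\mathcal P}_2$ that the paper only asserts. The one local difference is the case $x\equiv 2 \pmod 3$. You use $x+x=2x\in V_3$, while the paper writes $x=1+(x-1)$ with $1,\,x-1\in V_3$. Both are legitimate, since ${\mathcal P}_2$ allows $m=n$, as the paper itself uses in Example~\ref{Ex3}.

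The real divergence is for $W_k$. The paper enlarges $W_k$ to $\{p^{2(m-1)}k^{n-1}\}$ with a prime $p>k+1$. Ruling out $p^{2d}k^a+p^{2f}k^b=p^{2g}k^c$ then takes a case reduction down to $p^{2d}-k^a=\pm 1$, and those equations are settled by Mih\u{a}ilescu's theorem (Catalan's conjecture). Your observation that $k\equiv 1 \pmod{k-1}$ places $W_k$ properly inside the GD sequence $V_{k-1}$ makes all of this unnecessary and is completely elementary.

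The paper's construction keeps the extension very thin: a two-generator multiplicative monoid of density zero, structurally close to $W_k$. The theorem as stated does not need that. Your route also gives something extra for Problem~\ref{Pr1}. If $k$ is odd, then $W_k\subseteq V_2$; if $k\equiv 1 \pmod 3$, then $W_k\subseteq V_3$. Since $V_2$ and $V_3$ are maximal by the first part of the theorem, you get an explicit maximal GD sequence containing $W_k$ in those cases.
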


The proof of Theorem \ref{T1} depends in a crucial way on the following theorem, Theorem \ref{T3}. Theorem \ref{T3} is not original; rather, it is a compilation of  results of several authors.

 \begin{theorem}[Cohen, Fermat, Ratcliffe, Selmer,  Wiles, and Xuan] \label{T3}
 
 Let $a$ be a nonnegative integer. Then, 
 \begin{itemize}
 \item for every odd prime $p$, the Diophantine equation 
 \begin{equation} \label{Eqp}
 x^p + y^p = 11^a z^p
 \end{equation}
 has no solutions in integers $x$, $y$, and $z$ such that $xyz \neq 0$; and
 \item
 the Diophantine equations 
 \begin{equation} \label{Eq4}
 x^4 \pm y^4 = 11^a z^4
 \end{equation}
  have no solutions in integers $x$, $y$, and $z$ such that $xyz \neq 0$.
 \end{itemize}
 \end{theorem}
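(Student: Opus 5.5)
Theorem \ref{T3} is a compilation, so the plan is to reduce each case to a known result and cite the right source rather than prove anything new. First, reduce the exponent of $11$. If $a \geq p$ (respectively $a \geq 4$), write $a = pq + r$ with $0 \leq r < p$ and replace $z$ by $11^q z$. This lets us assume $0 \leq a < p$ (respectively $0 \leq a \leq 3$). Next, assume $\gcd(x,y)=1$: a common prime $\ell$ of $x$ and $y$ gives $\ell^p \mid 11^a z^p$. If $\ell \ne 11$, divide $x,y,z$ by $\ell$. If $\ell = 11$ and $11 \nmid z$, we get $p \le a$, which is impossible; otherwise divide by $11$ and continue. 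So we may assume $x,y$ coprime, with $a$ small.

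For the odd prime exponent, the case $a=0$ is Fermat's Last Theorem (Wiles; Euler/Fermat for $p=3$). For $a \geq 1$ the equation is $x^p+y^p = c z^p$ with $c = 11^a$, an $11$-power-free "generalized Fermat" coefficient. For $p=3$, the equation $x^3+y^3 = 11 z^3$ or $121 z^3$ has no nontrivial solutions; this is classical (Selmer's tables of $x^3+y^3=Az^3$). For $p \geq 5$, I would cite the modular method (Ribet, Kraus, Halberstadt), whose specific cases are treated in the literature attributed to Cohen, Ratcliffe and Xuan. The Frey curve attached to $x^p+y^p=11^a z^p$ yields, by level lowering, a weight 2 newform of level $2\cdot 11$ or $22$ (or $11$). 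One then compares the Fourier coefficients of the few newforms at these levels with traces of Frobenius of the Frey curve, at small primes such as $3$ and $5$. This gives a contradiction for $p$ outside a finite set, and the remaining small $p$ are handled by congruence or descent arguments. The case $p=11$ needs separate care, since $11$ divides the coefficient.

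For exponent $4$ with the plus sign, $x^4+y^4 = 11^a z^4$ with $a \in \{0,1,2,3\}$. For $a=0$ this is Fermat's descent. For $a=1,3$, reduce mod $16$ or $5$. Fourth powers are $0,1 \bmod 16$, so $x^4+y^4 \in \{1,2\} \bmod 16$ when $x,y$ are coprime, while $11 z^4 \in \{0,11\}$ and $11^3 z^4 \in \{0,3\}$; there is no match. For $a=2$, the equation is $x^4+y^4 = (11z^2)^2$, a square, which is impossible by Fermat. For the minus sign, $x^4-y^4 = 11^a z^4$: $a=0$ and $a=2$ are instances of Fermat's result that $x^4-y^4=w^2$ has no nontrivial solutions. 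For $a=1,3$, check congruences mod $16$ and mod $5$ first. If these fail, use a descent on the factorization $(x^2-y^2)(x^2+y^2)=11^a z^4$ with coprime factors, leading to equations of the form $u^2+v^2 = $ fourth power or $11\times$ square. Those are then handled via elliptic curves of rank $0$, which is the Ratcliffe/Cohen input.

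The main obstacle is the $p \geq 5$ case with $a \geq 1$, which is genuinely deep and must be cited rather than reproved. I would present that step purely as a citation and keep full arguments only for the elementary reductions and the quartic congruence cases.
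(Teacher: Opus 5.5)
Your part (i) is the paper's argument: reduce $a$ below the exponent, reduce to $\gcd(x,y)=1$, then use Wiles for $a=0$, Selmer's tables for $p=3$, and a citation for $p\ge 5$. The reduction to $\gcd(x,y)=1$ already gives the pairwise coprimality required by Cohen's Theorem 15.5.3, which is the result the paper cites for $p\ge5$. Your sketch of the modular method is inaccurate in detail: there are no weight-$2$ newforms of level $22$ at all, so no Fourier coefficients need comparing. Since you present this step purely as a citation, it does no harm, but you should cite the precise theorem.

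Part (ii) is where you genuinely differ, and your route is correct.
\begin{itemize}
\item \emph{Plus sign.} The paper uses one uniform argument for $a=1,2,3$: $11\equiv 3 \pmod 4$ divides $(x^2)^2+(y^2)^2$, hence divides $x$ and $y$, and then $11^4\mid 11^az^4$ with $a\le 3$ forces $11\mid z$. You instead use the mod-$16$ obstruction for $a=1,3$ (your residue sets are right) and Fermat's theorem on $x^4+y^4=w^2$ for $a=2$. This is equally valid, just less uniform.
\item \emph{Minus sign, $a=2$.} Here you need less than the paper. This case is Fermat's right-triangle theorem applied with $w=11z^2$. So the Ratcliffe--Tho table is needed only for $11$ and $1331$, whereas the paper invokes it for $121$ as well.
\item \emph{Minus sign, $a=1,3$.} Drop the congruence check: no congruence can succeed. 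For every $m$, the triple $(x,y,z)=(1,1,m)$ satisfies $x^4-y^4\equiv 11^az^4 \pmod{m}$ with $\gcd(x,y)=1$ and $z\neq 0$; this reflects the rational points $(1:\pm1:0)$ on the curve. Your descent to rank-$0$ elliptic curves is the right kind of argument. As written, though, it names neither the curves nor their ranks and torsion, so it is not yet a proof. Either carry it out explicitly, or do what the paper does and cite Table 3 of Ratcliffe--Tho, from which $11$ and $1331$ are absent.
\end{itemize}
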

 
 We conclude the introduction by stating an open problem.
 
 \begin{problem} \label{Pr1}
 For each of the GD sequences defined in Example \ref{Ex1}, Example \Ref{Ex2}, and Example \ref{Ex3} construct a maximal GD sequence containing it.
 \end{problem} 

The above problem appears to be simpler for sequences in the family $V_k$, $k \geq 2$. For such sequences we work with residue classes modulo $k$, so it is not difficult to find a maximal GD
sequence containing $V_k$  when $k$ is relatively small. Also, if $p$ is prime and $p \mid k$, then $V_k$ is a subsequence of $V_p$. Thus, it is sufficient to find maximal GD sequences containing $V_p$  for primes $p \geq 5$. We do not have a formula which works for all primes  $p$.

\section{Proofs of the Theorems}
 
 First, we prove Theorem \ref{T3}. 
 
 \begin{proof}
 (i) We can assume that $a<p$. Indeed, let $a = pq + r$ for some nonnegative integers $q$ and $r$ with $r <a$. Then, equation \eqref{Eqp} can be rewritten as
 $$x^p + y^p = 11^r (11^qz)^p.$$
 
 Next, if $a=0$, equation \eqref{Eqp} becomes  $x^p+y^p=z^p$ and was solved by Wiles \cite{Wiles}. Thus, we can assume $1 \leq a \leq p-1$. 
 
 Next, we consider the case $p=3$. In 1951, Selmer \cite{Selmer} published an extensive study on the cubic Diophantine equation $ax^3 + by^3 +cz^3 = 0$. Among many other things he showed that the Diophantine 
 equations $x^3 + y^3 = 11z^3$ and $x^3 + y^3 = 121z^3$ have no integer solutions with $xyz \neq 0$. (See \cite{Selmer} Table IV on p.352.)  Thus, the first part of the theorem holds when $p=3$.
 
 For the case $p \geq 5$ we use the result of Cohen \cite{Cohen} Theorem 15.5.3 which states 
 
 {\it Suppose that $3 \leq s \leq 100$ is prime, $s \neq 31$.  Then the equation  $x^p + y^p = s^\alpha z^p$ with prime $p \geq 5$ and any $\alpha \geq 1$ does not have any solutions with $x, y, z$ nonzero and pairwise coprime.}

One obstacle to using the above result is the requirement $x,y,z$ to be pairwise coprime. 
However, we can show that if $x,y,z$ are nonzero integers satisfying equation \eqref{Eqp} for some integer $a$, $1 \leq a \leq p-1$, then there exists 
nonzero pairwise coprime integers $x_1, y_1, z_1$ satisfying equation \eqref{Eqp} with the same $a$. 

Let $x,y,z$ be nonzero integers satisfying equation \eqref{Eqp} for some integer $a$ between $1$ and $p-1$. We can assume that $\gcd(x,y,z)=1$. If $\gcd(x,y,z)=d > 1$, then $x/d, y/d$, and $z/d$ satisfy equation \eqref{Eqp} and
$\gcd (x/d, y/d,z/d)=1$. Next, $11 \nmid x$. Otherwise, $11 \mid y^p = 11^az^p - x^p$, so $11 \mid y$, Thus, $11^p | x^p + y^p = 11^az^p $, and $p \mid z$ (since $a < p$), which contradicts $\gcd(x,y,z)=1$. 
Similarly, $11  \nmid y$. We claim that $x,y,z$ are pairwise coprime. Assume not. First, consider the case $\gcd(x,y) = d_1 > 1$. Let $q$ be a prime divisor of $d_1$. Then, $q \neq 11$ and $q \mid x^p + y^p = 11^a z^p$, so $q \mid z$. 
Therefore, $q \mid \gcd(x,y,z)=1$, a contradiction. We proceed similarly in the cases $\gcd(x,z) > 1$ and $\gcd(y,z)>1$. 

Applying Cohen's result \cite{Cohen} with $s=11$ and $p \geq 5$ we obtain that there are no nonzero pairwise coprime integers $x_1, y_1,z_1$ which satisfy \eqref{Eqp} with $1 \leq a \leq p-1$. This completes the proof of part (i) of the theorem.

(ii)  Proceeding as in part (i), we can assume that in both Diophantine equations \eqref{Eq4}, we have $a=0,1,2,$ or $3$.

When $a=0$ Fermat proved that the equations  $$x^4 \pm y^4 = z^4$$ have no solutions in nonzero integers $x,y,z$. 

So, from now on, we consider the case $a=1,2,3$ only. 

First, we consider the Diophantine equation 

\begin{equation} \label{Eq4+}
x^4 + y^4 = 11^a z^4,
\end{equation}
with $a=1,2$, or $3$.

Assume that $(x,y,z)$ is a solution in nonzero integers of equation \eqref{Eq4+}. As before, we can assume that $\gcd (x,y,z)=1$. (If $\gcd(x,y,z)=d >1$, consider $x/d, y/d, z/d$.)

Next we use two well-known facts from elementary number theory. 

Fermat's theorem {\it If $a$ is an integer and $p$ is a prime such that $p \nmid a$, then $a^{p-1} \equiv \modd{1} {p}$}. Fermat's theorem implies 
{\it If $p \equiv \modd{3} {4}$ is a prime such that $p \mid x^2 + y^2$ for some integers $x$ and $y$, then $p \mid x$ and $p \mid y$.} (For a proof assume that $p \nmid x$ and raise the congruence
$x^2 \equiv \modd{-y^2} {p}$ to power $(p-1)/2$ to get a contradiction.)

Now, $11 \mid 11^a z^4 = (x^2)^2 + (y^2)^2$, so $11 \mid x$ and $11 \mid y$. Therefore, $11^4 \mid x^4 + y^4 = 11^a z^4$, thus $11 \mid z$. This contradicts $\gcd(x,y,z)=1$, so equation \eqref{Eq4+} has no solutions in nonzero integers $x,y,z$. 

Next, we consider the Diophantine equation 

\begin{equation} \label{Eq4-}
x^4 - y^4 = 11^a z^4,
\end{equation}
with $a=1,2$, or $3$.

If $x,y,z$ are nonzero integers satisfying equation \eqref{Eq4-} then the equation can be rewritten in the form $(x/z)^4 - (y/z)^4 = 11^a$.

Very recently, in April 2026, A.~Ratcliffe and N.~Tho \cite{Ratcliffe} determined all positive integers $\leq 10000$ which can be represented as a difference of fourth powers of two rational numbers. Table 3 on p.14 of the paper \cite{Ratcliffe} contains all positive integers up to $10000$ which are difference of two fourth powers of rational numbers, together with one representation for each integer of that form. The integers $11$, $121$, and $1331$ are not in the above table.

Therefore, equation \eqref{Eq4-} has no solutions in nonzero integers $x,y,z$, which completes the proof of the theorem.
\end{proof}

A quick observation from Table 3 on p.14 of the paper \cite{Ratcliffe} is that the second Carmichael number $1105=7^4 - 6^4$ and the fourth Carmichael number $2465=9^4-8^4$ are the only Carmichael numbers up to $10000$ which are the difference of two fourth powers of rationals. Are there other Carmichael numbers which are the difference of two fourth powers of rationals (or integers)?

Next, we prove Theorem \ref{T1}
\begin{proof}
We need to show that $T_k$ is not a maximal GD sequence when $k \geq 3$. 

Consider the case when $k$ is a composite integer and $k \neq 4$. Let $p$ be the least prime divisor of $k$. Then, $k=pl$ for some integer $l \geq 3$. (If $l=2$, then $p=2$ and $k=4$.)

In this case $T_k$ is a proper subsequence of the GD sequence $T_l$, so it is not a maximal GD sequence. 

Next, let $k=p$, where $p$ is an odd prime. Let $T'_p$ be the sequence whose terms are the elements of the set $\{ 11^a n^p\  |\ n \in \mathbb{N}, a \in \mathbb{Z}, a \geq 0 \}$  arranged in increasing order. 

Clearly, $T_p$ is a proper subsequence of the sequence $T'_p$. We claim that $T'_p$  is a GD sequence. The sequence $T'_p$ does satisfy property ${\mathcal P}_1$. To prove that it satisfies property ${\mathcal P}_2$ we show that the Diophantine equation 
\begin{equation} \label{Eqp11}
11^a x^p + 11^b y^p = 11^c z^p
\end{equation}
has no solutions with $x,y,z$ nonzero integers and $a,b,c$ nonnegative integers.

Let  $x,y,z$ be nonzero integers and let $a,b,c$ be nonnegative integers which satisfy equation  \eqref{Eqp11}. We can assume that $11 \nmid x$, $11 \nmid y$, and $11 \nmid z$. If $x =11^q x_1$ where $11 \nmid x_1$ we replace $a$ by $a + pq$ and $x$ by $x_1$ and proceed similarly with $y$ and $z$. Next, we can assume $a = \min (a,b,c)$. Otherwise, if necessary,  we relabel $x,y,z$, or, to move a term from one side of the equation to the other, replace $x$ by $-x$, or $y$ by $-y$, or $z$ by $-z$,  and relabel. 
Dividing by $11^a$ and relabeling the exponents of $11$ we obtain 
\begin{equation} \label{Eqp11s}
 x^p + 11^b y^p = 11^c z^p.
\end{equation}

Note that in equation \eqref{Eqp11s} either $b=0$ or $c=0$. Otherwise, $11 \mid 11^c z^p - 11^b y^p = x^p$, so $11 \mid x$, a contradiction. 

We can assume $b=0$ (otherwise, replace $y$ by $-y$, $z$ by $-z$,  and relabel. )

Thus, equation \eqref{Eqp11s} further reduces to the equation $$x^p + y^p = 11^c z^p,$$ which,  by Theorem \ref{T3} has no solutions in nonzero integers $x,y,z$. 

Therefore, $T'_p$ is a GD sequence. 

Finally, let $k=4$. Let $T'_4$ be the sequence whose terms are the elements of the set $\{ 11^a n^4\  |\ n \in \mathbb{N}, a \in \mathbb{Z}, a \geq 0 \}$  arranged in increasing order. 

Again, $T_4$ is a proper subsequence of the sequence $T'_4$. We claim that $T'_4$ is a GD sequence. Property ${\mathcal P}_1$ holds for $T'_4$ and we need to establish that property ${\mathcal P}_2$ holds  as well. We claim that
 the Diophantine equation 
\begin{equation} \label{Eq411}
11^a x^4 + 11^b y^4 = 11^c z^4
\end{equation}
has no solutions with $x,y,z$ nonzero integers and $a,b,c$ nonnegative integers.

Let  $x,y,z$ be nonzero integers and let $a,b,c$ be nonnegative integers which satisfy equation \eqref{Eq411}. 
Proceeding as in the previous argument we can assume $11 \nmid x$, $11 \nmid y$, and $11 \nmid z$. Let $m = \min(a,b,c)$. Dividing equation \eqref{Eq411} by $11^m$  and relabeling the exponents of $11$ we conclude that at least one of $a,b,c$ equals $0$. 
As above, we cannot have exactly one of $a,b,c$ to equal $0$. For example, if $a=0$, $b > 0$, $c>0$ equation \eqref{Eq411} becomes
$$x^4 + 11^b y^4 = 11^c z^4.$$
In this case, $11 \mid 11^c z^4 - 11^b y^4 = x^4$, so $11 \mid x$, a contradiction. We proceed similarly in the cases $b=0$, $ac > 0$ and $c=0$, $ab >0$. Therefore, equation \eqref{Eq411} reduces to one of, 
$$x^4 + y^4 = 11^c z^4, \quad x^4 + 11^b y^4 = z^4, \text{ or } 11^ax^4 + y^4 = z^4.$$

After relabeling all three equations above become one of the equations 
$$x^4 \pm y^4 = 11^a z^4,$$
which is equation \eqref{Eq4}.

We showed in Theorem \ref{T3} (ii) that the Diophantine equation \eqref{Eq4} has no solutions in nonzero integers $x,y,z$ when $a$ is nonnegative integer. Thus, equation \eqref{Eq411} has no solution 
with $x,y,z$ nonzero integers and $a,b,c$ nonnegative integers, completing the proof of the theorem.

\end{proof}

Last, we prove Theorem \ref{T2}.
\begin{proof}
(i) The sequence $V_2$ (all odd natural numbers) is a maximal GD sequence. If $V_2$ is a proper subsequence of a sequence $V'_2$ of positive integers, then $V'_2$ contains an even integer and property ${\mathcal P}_2$ does not hold for $V'_2$. 

Similarly, $V_3$ is a maximal GD sequence. Indeed, let $V_3$ be a proper subsequence of the sequence of positive integers $V'_3$. Let $a$ be a term in $V'_3$ which is not in $V_3$. If $a \equiv \modd{0} {3}$, then $1 + a \in V_3$, so 
property ${\mathcal P}_2$ does not hold for $V'_3$. If $a \equiv \modd{2} {3}$, then $a-1 \in V_3$, and $1 + (a-1)=a$, so 
property ${\mathcal P}_2$ does not hold for $V'_3$. 

We claim that if  $k \geq 4$, then $V_k$ is not a maximal GD sequence. Indeed, let $V'_k$ be the sequence whose terms are the elements of the set $\{  n \in \mathbb{N}\ | \ n \equiv \modd{\pm 1} {k} \}$ arranged in increasing order. Then, $V_k$ is a proper subsequence of $V'_k$ and $V'_k$ is a GD sequence.

(ii) We need to show that the sequence $W_k =(k^{n-1})_{n\geq 1}$ where $k \geq 3$ is not a maximal GD sequence.  Let $p$ be a prime such that $p > k+1$.  Let $W'_k$ be the sequence with terms the elements of the set  
$\{ p^{2(m-1)} k^{n-1} | m \in \mathbb{N}, n \in \mathbb{N} \}$ arranged in increasing order. Clearly, $p \nmid k$, so $W_k$ is a proper subsequence of $W'_k$. We claim that $W'_k$ is a GD sequence.  Property ${\mathcal P}_1$ clearly holds. 
To show that property ${\mathcal P}_2$ holds for $W'_k$ we prove that the Diophantine equation
\begin{equation} \label{Eqabc}
p^{2d} k^{a} + p^{2f}k^b = p^{2g} k^c
\end{equation}
 has no solutions in nonnegative integers $a,b,c,d,f,g$. 
 
 Assume that $a,b,c,d,f,g$ are nonnegative integers which satisfy equation \eqref{Eqabc}. We can assume that at least one of $d,f,g$ equals $0$. Indeed,  let $n = \min(d,f,g)$. Dividing equation \eqref{Eqabc} by $p^{2n}$ and relabeling we  obtain that at least one of the exponents $d,f,g$ is $0$. Next, it is not possible for exactly one of the exponents $d,f,g$ to be $0$. For example, if $d=0$ and $fg > 0$ the equation \eqref{Eqabc} has the form $$k^a + p^{2f}k^b = p^{2g} k^c.$$
 Thus, $p \mid  p^{2g} k^c - p^{2f}k^b = k^a$, so $p \mid k$. This is impossible since  $p > k$. We proceed similarly if $f=0$ and $dg > 0$ and when $g=0$ and $df > 0$. 
 
 Therefore, two or three of the exponents $d,f,g$ are $0$. 
 
 After relabeling we are left with three Diophantine equations to consider. 
 
 These are
 \begin{equation} \label{Eq3}
k^a + k^b = k^c, \quad p^{2d}k^a + k^b = k^c, \text{ and } k^a + k^b = p^{2d}k^c
\end{equation}

Since $k \geq 3$ the Diophantine equation $k^a + k^b = k^c$ has no solutions in nonnegative integers. (We need $c > a$ and $c>b$, so $k^c > 2k^{c-1} \geq k^a + k^b$.)

After relabeling, we are left with the two Diophantine equations
\begin{equation} \label{Eq2}
k^a \pm k^b = p^{2d}k^c.
\end{equation}

Same as above, we can assume that at least one of $a,b,c$ is zero. Again, it is not possible for exactly one of $a,b,c$ to be $0$. For example, if $c = 0$ and $ab > 0$ we have 
$k^a \pm k^b = p^{2d}$, so $k \mid p^{2d}$ which is impossible since $1 < k < p$ and $p$ is prime. The remaining cases $a=0$, $bc>0$ and  $b=0$, $ac >0$ are analogous. 

When $a=b=0$ we get $1 \pm 1 = p^{2d}k^c$ which is impossible since $p>k \geq 3$ and if $k=c=0$, $1 \pm 1 \neq 1$. 

Thus, at least one of $a,b$ is  nonzero, so $c=0$. 

Relabeling we get the two Diophantine equations 
\begin{equation}  \label{EqCat}
p^{2d} - k^a = 1 \text{ and } k^a - p^{2d} = 1.
\end{equation}

Note that in the above two equations we cannot have $d=0$ since $k \geq 3$. So, $d \geq 1$. 

Also, $a \geq 2$ since $p > k+1$. 

Now, we  use the famous Catalan's conjecture (also known as  Mih\u{a}ilesku's theorem).  It states

{\it  The only solution in the natural numbers of $$x^a - y^b = 1$$
for $a,b >1$, $x,y>0$ is $x=3$, $a=2$, $y=2$, $b=3$.}

Following the work of many mathematicians before him,  Mih\u{a}ilesku \cite{Mih}  proved the Catalan's conjecture in 2002. 

Since $p \geq 5$, equations \eqref{EqCat} have no solutions in nonnegative integers, thus \eqref{Eqabc} has no solutions in nonnegative integers $a,b,c,d,f,g$, completing the proof of the theorem.

\end{proof}

\section{ Acknowledgments} The second author  appreciates the help of Michael Bennett on the state of the art of several Diophantine equations.

\bigskip
\hrule
\bigskip

\noindent 2020 {\it Mathematics Subject Classification}:
Primary 11B83, Secondary 11D41, 11D72.

\noindent \emph{Keywords: }
integer sequences, generalized Fermat's equation, Catalan's conjecture.

\bigskip
\hrule
\bigskip

\noindent (Concerned with sequences
\seqnum{A000244},
\seqnum{A000302},
\seqnum{A000351},
\seqnum{A000400},
\seqnum{A000578},
\seqnum{A000583},
\seqnum{A000584},
\seqnum{A001014},
\seqnum{A005408},
\seqnum{A016777},
\seqnum{A016813}, and
\seqnum{A016863}.)

\end{document}